\newcommand{\Lp}{L}
\newcommand{\Hg}{\mathrm{H}}
\newcommand{\RHg}{\mathrm{RH}}
\newcommand{\NLSE}{\mathop{\mathrm{NLSE}}}
\newcommand{\LSI}{\mathop{\mathrm{LSI}}}
\newcommand{\PW}{\mathop{\mathrm{PW}}}
\newcommand{\RPW}{\mathop{\mathrm{RPW}}}
\newcommand{\svec}[2]{\left[ \begin{array}{c} #1 \\ #2 \end{array} \right]}
\newcommand{\mC}{\mathbb{C}}
\newcommand{\mR}{\mathbb{R}}
\newcommand{\mT}{\mathbb{T}}
\newcommand{\mZ}{\mathbb{Z}}
\begin{document}

\title{Logarithmic Sobolev inequalities and spectral concentration for the cubic Schr\"odinger equation}


\author{Gordon Blower$^{\rm a}$$^{\ast}$\thanks{$^\ast$Corresponding author. Email: g.blower@lancaster.ac.uk
\vspace{6pt}},
Caroline Brett$^{\rm a}$
and Ian Doust$^{\rm b}$\\\vspace{6pt}  $^{\rm a}${\em{Department of Mathematics and Statistics, Lancaster University, Lancaster, England}}; $^{\rm b}${\em{School of Mathematics and Statistics, University of New South Wales, Sydney, Australia}}
\vspace{6pt}
\received{version 1.1 12 August 2013 }
}

\maketitle

\begin{abstract} The nonlinear Schr\"odinger equation $\NLSE(p, \beta)$,
$-iu_t=-u_{xx}+\beta \vert u\vert^{p-2} u=0$,
arises from a Hamiltonian on infinite-dimensional phase space
$\Lp^2(\mT)$. For $p\leq 6$, Bourgain (Comm. Math. Phys. 166 (1994), 1--26) has shown that there exists a Gibbs measure $\mu^{\beta}_N$ on balls $\Omega_N=
\{ \phi\in \Lp^2(\mT) \,:\, \Vert \phi \Vert^2_{\Lp^2} \leq N\}$
in phase space such that the Cauchy problem for $\NLSE(p,\beta)$ is well posed on the support of $\mu^{\beta}_N$, and that $\mu^{\beta}_N$ is invariant under
the flow.
This paper shows that $\mu^{\beta}_N$ satisfies a logarithmic Sobolev inequality for the focussing case $\beta <0$ and $2\leq p\leq 4$
on $\Omega_N$ for all $N>0$; also $\mu^{\beta}$ satisfies a restricted LSI for $4\leq p\leq 6$ on compact subsets of $\Omega_N$ determined by H\"older
norms. Hence for $p=4$, the spectral data of the periodic Dirac operator
in $\Lp^2(\mT; \mC^2)$ with random potential $\phi$ subject to $\mu^{\beta}_N$ are concentrated near to their mean values. The paper concludes with 
a similar result for the spectral data of Hill's equation when the potential is random and subject to the Gibbs measure of KdV.
\begin{keywords} nonlinear Schr\"odinger equations, Gibbs measure, Hill's equation \end{keywords}

\begin{classcode} 37L50, 35QK53 \end{classcode}
\end{abstract}

\section{Introduction}
This paper is concerned with the statistical mechanics of families of solutions for partial differential equations.
The periodic nonlinear Schr\"odinger equation studied is 
    \begin{equation}\label{1.3}
   \NLSE(p,\beta): \qquad
   -i  \frac{\partial u}{\partial t}
      =- \frac{\partial^2 u}{\partial x^2} +\beta \vert u(x,t) \vert^{p-2} u(x,t),
  \end{equation}
where $u: \mT \times \mR \to \mC$, $p \ge 2$ and $\beta \in \mR$.  The associated Cauchy problem specifies the initial condition $u(x,0)=\phi (x)$, where $\phi$ is periodic.

Invariant measures associated with this equation have been studied by several authors, including Lebowitz, Rose and Speer \cite{LRS}, and Bourgain \cite{B,B1}. Bourgain \cite{B}
showed that for $p \le 6$, that 
there exists a Gibbs measure $\mu^{\beta}_N$ on balls $\Omega_N=
\{ \phi\in \Lp^2(\mT) \,:\, \Vert \phi \Vert^2_{\Lp^2} \leq N\}$
in phase space such that the Cauchy problem for $\NLSE(p,\beta)$ is well posed on the support of $\mu^{\beta}_N$, and that $\mu^{\beta}_N$ is invariant under
the flow. This paper shows that $\mu^{\beta}_N$ satisfies a logarithmic Sobolev inequality for the focussing case $\beta <0$ and $2\leq p\leq 4$
on $\Omega_N$ for all $N>0$.  In the case of $4\leq p\leq 6$, we show that $\mu^{\beta}_N$ satisfies a restricted LSI on compact subsets of $\Omega_N$ determined by 
certain H\"older norms.

A consequence of this result is that the spectral data of the periodic Dirac operator
in $\Lp^2(\mT; \mC^2)$ with random potential $\phi$ subject to $\mu^{\beta}_N$ for $p=4$ are concentrated near to their mean values. 
The equation $\NLSE(p,\beta)$ arises from the Hamiltonian
    \begin{equation}\label{1.1}
  H(\phi)=\frac{1}{2} \int_{\mT}
    \bigl\vert \phi'(x) \bigr\vert^2 \,\frac{dx}{2\pi}
        +\frac{\beta}{p}
        \int_{\mT} \vert \phi(x) \vert^p \frac{dx}{2\pi}.
  \end{equation}
There is a sequence of invariants under the motion, including the number operator
  \begin{equation}\label{1.4}
   N = \int_{\mT} \vert u(x,t) \vert^2 \frac{dx}{2\pi}
  \end{equation}
and the Hamiltonian $H(u(\cdot,t))$ itself. In much of what follows, it will be more convenient to work with the coordinates of elements of $\Lp^2(\mT; \mC)$ with respect to the standard Fourier basis. 
That is, we shall consider the $\Lp^2$ function $\phi(x) =\sum_{n=-\infty}^\infty (a_n+ib_n)e^{inx}$ with real Fourier coefficients $((a_j,b_j))_{j=-\infty}^\infty$ in $\ell^2$. 
One can then readily verify that, with 
  \begin{equation}\label{eqn4}
   H(\phi)=\frac{1}{2} \sum_{n=-\infty}^\infty n^2(a_n^2+b_n^2)
        + \frac{\beta}{p} \int_{\mT}
             \,\bigl\vert \sum_{n=-\infty}^\infty (a_n+ib_n)e^{inx}\bigr\vert^p
                 \,\frac{dx}{2\pi},
  \end{equation}
\noindent the $((a_j,b_j))_{j=0}^\infty$ give a system of canonical coordinates. In order to obtain a normalized Gibbs measure, one restricts 
attention to bounded subsets of $\Lp^2(\mT; \mC)$, and forms the modified canonical ensemble with phase space on the ball
     \begin{equation} \Omega_N = \Bigl\{ \phi\in \Lp^2(\mT; \mC) \,:\,
                            \int_{\mT} \vert \phi (x)\vert^2 {\frac{dx}{2\pi}} \leq N\Bigr\}
     \end{equation}
\noindent for fixed $N>0$. We then seek to define the Gibbs measure on $\Omega_N$ via
  \begin{equation}\label{1.5}
 \mu^{\beta}_N= Z(p, \beta ,N)^{-1} \exp \bigl( -H(\phi) \bigr)
          \ {\bf I}_{\Omega_N}(\phi )\!\! \prod_{x\in [0,2\pi ]} d^2\phi (x).
  \end{equation}
To make this more precise, let $(\zeta_j, \zeta_j')_{j=-\infty}^\infty$  be mutually independent $N(0,1)$ 
Gaussian random variables, and let Wiener loop $\mathcal{W}$ be the measure that is
induced on $\Lp^2({\mT}; {\mC})$ by the random Fourier series
  \[ \omega \mapsto \phi_\omega(x) = \sum_{j=-\infty ;j\neq 0}^\infty \frac{\zeta_j+i\zeta'_j}{j} e^{ijx} . \]
Thus $\mathcal{W}$ provides an interpretation of 
$\exp\bigl( -\frac{1}{2} \int \vert \phi'(x) \vert^2 \, dx \bigr) \prod_{x\in {\mT}}d^2\phi (x)$. In  terms of the 
 Fourier coefficients this may be written as the measure on $\ell^2$ given by the product
$\prod_{j=-\infty}^\infty \exp\bigl(-\frac{1}{2} j^2(a_j^2+b_j^2) \bigr) j^2\, \frac{da_jdb_j}{2\pi }$.

Dealing with the other term of the Hamiltonian from \ref{eqn4} is more delicate. By results of \cite{B} and \cite{LRS}, the factor $\exp\bigr( -\frac{\beta}{p} \int \vert \phi (x)\vert^p dx \bigr)$ is
integrable over $\Omega_N$ with respect to Wiener loop for
all $\beta\in {\mR}$, $0 < N < \infty$ and $2 \leq p < 6$. Here $\beta <0$ gives the focussing case, where $p=6$ is marginal for there to exist a normalized Gibbs measure; see \cite{LRS}.

By a cylindrical function, we mean $F:\ell^2\rightarrow {\mR}$ of the form $F(x)=f((x_j)_{j=-m}^m)$ for some $m<\infty$, 
where $f:{\mR}^{2m+1} \rightarrow {\mR}$ is a $C^\infty$ function of compact support in the sense of calculus. 

\begin{definition} 
A probability measure $\nu$ on a Borel subset $\Omega$ of  $\ell^2$ is said to satisfy a \emph{logarithmic Sobolev inequality} (LSI) with constant $\alpha >0$ if
  \begin{equation}\label{1.6}
\LSI (\alpha ):\qquad  \int_\Omega F(x)^2\log \Bigl( F(x)^2/\int F^2\, d\nu \Bigr)\, \nu (dx)
        \leq \frac{2}{\alpha} \int_\Omega \Vert\nabla  F(x)\Vert^2_{\ell^2}\, \nu (dx)
  \end{equation}
for all cylindrical functions $F$.
\end{definition}

Such a logarithmic Sobolev inequality is used to derive concentration inequalities for Lipschitz functions. If $F:\Omega\rightarrow {\mR}$ is such that $\vert F(x)-F(y)\vert\leq 
\Vert x-y\Vert_{\ell^2}$ 
for all $x,y\in \ell^2$, and also $\int_\Omega F(x)\nu (dx)=0$, then $\int_{\Omega}e^{tF(x)}\nu (dx)\leq e^{t^2/(2\alpha )}$ for all $t\in {\mR}$; so $F$ is concentrated close to 
its mean value. 
In this respect, $\nu$ resembles a Gaussian measure, and the significant examples live on $K_\sigma$ subsets of the infinite-dimensional Hilbert space $\ell^2$. 
In section 2 of this paper,
the measures live on the ball $\Omega_N$ in $\ell^2$, 
and then in sections 3 and 4 we consider random variables which are Lipschitz functions on $\Omega_N$.\par

\indent In order to state the main technical result of this paper, we need some additional notation.
Let $\ell^2 =\ell^2({\mZ};{\mR})$ have the usual inner product. For $\gamma \geq 0$ we introduce the H\"older spaces
  \[
  {\Hg}^\gamma =\Bigl\{ \phi (x)=\sum_{n=-\infty }^\infty \phi_n e^{inx} \,:\,
     \Vert \phi \Vert_{\Hg^\gamma}^2 = 
    \vert\phi_0\vert^2+\sum_{n=-\infty}^\infty \vert n\vert^{2\gamma }\vert\phi_n\vert^2<\infty \Bigr\}.
  \]
  Let $\Omega_{N, K }
   =\{ \phi \in {\Hg}^\gamma \,:\, \Vert \phi \Vert^2_{\Lp^2}\leq N \text{\ and\ } \Vert \phi\Vert_{{\Hg}^\gamma}
      \leq K\}$ and let $\Omega_{N,\infty}=\cup_{K=1}^\infty \Omega_{N,K}$.
 
\begin{theorem}\label{thm1.1}
The sets $\Omega_{N,K}$ ($K = 1,2,\dots$) form an
increasing sequence of convex and compact subsets of $\Omega_N$ such that:
\begin{enumerate}
\item[(i)] the Cauchy problem for $\NLSE(p,\beta )$ is well posed for initial datum $\phi\in\Omega_{N,\infty}$ for all $\beta\in {\mR}$ and all $2\leq p<6$;
\item[(ii)] $\Omega_{N,\infty}$ is invariant under the flow associated with $\NLSE (p,\beta )$, and  $\Omega_{N,\infty}$ supports the Gibbs measure on $\Omega_N$;
\item[(iii)] the Gibbs measure on $\Omega_{N,K}$ satisfies a logarithmic Sobolev inequality with some constant $\alpha (N,K)>0$.
\item[(iv)] For $2\leq p\leq 4$, the LSI holds on $\Omega_N$ itself with $\alpha_N>0$.
\end{enumerate}
\end{theorem}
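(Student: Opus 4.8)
The plan is to build the Gibbs measure on $\Omega_{N,K}$ by a bounded perturbation of the Gaussian Wiener loop $\mathcal{W}$ and then apply the Bakry--\'Emery criterion together with the Holley--Stroock perturbation principle. First, for the structural claims: $\Omega_{N,K}$ is the intersection of the closed ball $\{\Vert\phi\Vert_{L^2}^2\le N\}$ with the closed ball $\{\Vert\phi\Vert_{\Hg^\gamma}\le K\}$, both of which are convex; since the $\Hg^\gamma$-ball is a bounded set in a strictly stronger norm, Rellich-type compactness gives that $\Omega_{N,K}$ is compact in $\Hg^{\gamma'}$ for $\gamma'<\gamma$ and in particular in $L^2$, and the sets increase to $\Omega_{N,\infty}$ as $K\to\infty$. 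Parts (i) and (ii) I would simply quote from Bourgain's construction \cite{B}, noting that $\Omega_{N,\infty}$ has full $\mu^\beta_N$-measure and is flow-invariant because the $\Hg^\gamma$-norm (for the relevant small $\gamma$) is controlled along the flow on the support of the Gibbs measure.

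For (iii), write the Gibbs measure on $\Omega_{N,K}$ as $d\mu^\beta_{N,K}=Z^{-1}\exp\bigl(-\tfrac{\beta}{p}\int_{\mT}\vert\phi\vert^p\,\tfrac{dx}{2\pi}\bigr)\mathbf{I}_{\Omega_{N,K}}\,d\mathcal{W}$. The Gaussian reference measure $\mathcal{W}$, being a product of one-dimensional Gaussians with variances $1/j^2$, satisfies $\LSI(\alpha_0)$ with $\alpha_0$ determined by the smallest variance that matters; restricting a log-Sobolev measure to a convex set preserves the inequality with the same constant (this is standard — conditioning on a convex body does not hurt LSI), so $\mathcal{W}$ restricted to $\Omega_{N,K}$ still satisfies an LSI. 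Then the density $\exp(-\tfrac{\beta}{p}\int\vert\phi\vert^p)$ is, on the compact set $\Omega_{N,K}$, bounded above and below by positive constants depending on $N,K,p,\beta$ (here the $\Hg^\gamma$-bound with $\gamma$ large enough that $\Hg^\gamma\hookrightarrow L^p$ is exactly what makes $\int\vert\phi\vert^p$ bounded), so the Holley--Stroock lemma yields $\LSI(\alpha(N,K))$ with $\alpha(N,K)=\alpha_0\,e^{-\mathrm{osc}}$, $\mathrm{osc}$ being the oscillation of $\tfrac{\beta}{p}\int\vert\phi\vert^p$ over $\Omega_{N,K}$.

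For (iv), when $2\le p\le 4$ the interaction term is globally controlled on all of $\Omega_N$: by Gagliardo--Nirenberg / Sobolev embedding in one dimension, $\int_{\mT}\vert\phi\vert^p\,\tfrac{dx}{2\pi}\lesssim \Vert\phi\Vert_{L^2}^{p-p/2}\Vert\phi\Vert_{\dot H^{1/2}}^{p/2}$ or a similar estimate, and on $\Omega_N$ the $L^2$-factor is bounded by $N^{(2p-1)/4}$ or so, leaving a term that grows subquadratically in the $H^{1/2}$-norm when $p\le 4$ — hence it is dominated by the Gaussian quadratic form and the density $\exp(-\tfrac{\beta}{p}\int\vert\phi\vert^p)$ is again bounded above and below (in the focussing case $\beta<0$, after using the $L^2$-cutoff) by positive constants on $\Omega_N$, not merely on $\Omega_{N,K}$. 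One more application of Holley--Stroock over the convex body $\Omega_N$ gives $\LSI(\alpha_N)$.

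The main obstacle is the step where one claims the interaction density is bounded below and above by positive constants on all of $\Omega_N$ in the focussing regime: the exponent $\exp(+\tfrac{\vert\beta\vert}{p}\int\vert\phi\vert^p)$ is the dangerous factor, and one must genuinely use that for $p\le 4$ the a priori $L^2$-ball constraint forces $\int\vert\phi\vert^p$ to be bounded by a quantity that is only subquadratic in the $\dot H^{1/2}$-norm, so it cannot overwhelm $\exp(-\tfrac12\sum n^2(a_n^2+b_n^2))$; this is precisely the threshold phenomenon that fails at $p=6$ and becomes delicate at $p>4$, which is why for $4\le p\le 6$ we must retreat to the compact sets $\Omega_{N,K}$. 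Making the subquadratic domination quantitative (and checking it passes to an honest two-sided bound on the Radon--Nikodym density against $\mathcal{W}$, rather than only ensuring integrability) is where the real work lies; everything else is Bakry--\'Emery for the Gaussian, stability of LSI under conditioning on convex sets, and Holley--Stroock.
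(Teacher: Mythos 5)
Your structural claims and parts (i)--(ii) match the paper, which likewise just quotes Bourgain. Your route to (iii) is a legitimate variant: since $\Hg^\gamma\hookrightarrow \Lp^p(\mT)$ for suitable $\gamma<1/2$, the interaction $\tfrac{\beta}{p}\int_{\mT}\vert\phi\vert^p\,\tfrac{dx}{2\pi}$ really is bounded on $\Omega_{N,K}$, so Holley--Stroock over the Wiener loop restricted to the convex set $\Omega_{N,K}$ gives $\LSI(\alpha(N,K))$ directly. The paper instead adds a quadratic correction $W_K$ supported on low Fourier modes, proves uniform convexity of $H+W_K$ from the Hessian formula of Lemma~\ref{lem2.1} (Proposition~\ref{prop2.2}(ii), via the Pr\'ekopa--Leindler route of Bobkov--Ledoux), and then removes $W_K$ by Holley--Stroock. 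Both work for (iii); yours is arguably more economical there.

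Part (iv) is where your argument has a genuine gap. In the focussing case the Radon--Nikodym factor $\exp\bigl(\tfrac{\vert\beta\vert}{p}\int_{\mT}\vert\phi\vert^p\,\tfrac{dx}{2\pi}\bigr)$ is \emph{not} bounded above on $\Omega_N$ for any $p>2$: the $\Lp^2$-ball gives no control of $\Vert\phi\Vert_{\Lp^p}$, and the Gagliardo--Nirenberg estimate $\int\vert\phi\vert^p\le C\,\Vert\phi\Vert_{\Lp^2}^{p/2+1}\Vert\phi\Vert_{\Hg^1}^{p/2-1}$, being subquadratic in $\Vert\phi\Vert_{\Hg^1}$ for $p<6$, yields only \emph{integrability} of the density against the Gaussian (the Lebowitz--Rose--Speer normalizability argument), not a finite oscillation. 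Holley--Stroock requires $\Vert W\Vert_{\Lp^\infty}<\infty$, and with infinite oscillation the constant $\alpha_0 e^{-2\Vert W\Vert_\infty}$ degenerates to zero; so the step you yourself flag as ``the real work'' cannot be completed as a two-sided density bound. The paper's mechanism is different in kind: it never bounds the potential $V$, only its \emph{Hessian}. By Lemma~\ref{lem2.1} the concave contribution of $\tfrac{\beta}{p}V$ is controlled by $\Vert\phi\Vert_{\Lp^{p-2}}^{p-2}\bigl(\Vert\psi\Vert_{\Lp^\infty}^2+\Vert\theta\Vert_{\Lp^\infty}^2\bigr)$, and for $p\le 4$ one has $p-2\le 2$, so $\Vert\phi\Vert_{\Lp^{p-2}}^{p-2}$ is bounded by a power of $N$ on $\Omega_N$ --- this, not the $p=6$ normalizability threshold, is the true role of the hypothesis $p\le 4$. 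The inequality $N\vert j\vert^{2\delta}\le \delta j^2+(1-\delta)N^{1/(1-\delta)}$ then splits the concavity into a fraction of the kinetic quadratic form plus a multiple of $\Vert\phi\Vert_{\Lp^2}^2$; the latter produces the functional $Y_N$ of (\ref{2.8}), which \emph{is} bounded on $\Omega_N$, and Holley--Stroock is applied with the bounded perturbation $Y_N$ rather than with the unbounded interaction. To repair your scheme you would have to replace the two-sided density bound by this convexity-defect estimate (or by a perturbation lemma valid for unbounded perturbations, which you do not invoke).
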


The proof of Theorem~\ref{thm1.1} occupies most of section 2. A version of (iv) with additional hypotheses on $N$ appeared in \cite{GB}. Consequences for the spectral theory of 
Dirac operators with random potentials appear in section 3, where we show that some spectral data can be described by linear statistics on a suitable space of test functions. 
The final section 4 gives consequences for the spectrum of Hill's equation, where we present linear statistics that define Lipschitz functions.\par
\indent Various works concerning concentration of 
Gibbs measure for nonlinear Schr\"odinger equations appear in the literature; for instance, McKean and Vaninsky \cite{MV} consider the defocussing cubic Schr\"odinger equation. 
The authors of 
\cite{RJOT} consider large deviations relating to an invariant measure for a modified cubic Schr\"odinger equation in which 
the nonlinear factor $\beta \vert u\vert^2$ in (\ref{1.3}) is replaced by the bounded factor $\beta \vert u\vert^2/(1+\vert u\vert^2).$

All the measures that we consider will be Radon, that is inner regular and Borel, and we will be considerate about 
the compact sets which almost support the measure. Given a bounded and positive measure, we often 
use $Z$ to stand for the measure of the full space, and we multiply by $Z^{-1}$ to rescale the measure to become a probability.

\section{Convexity of the potentials}

\begin{definition} Let $\Omega$ be a convex subset of ${\Hg}^\gamma$, and $H:\Omega\rightarrow {\mR}$ be continuous. Then $H$ is uniformly convex on $\Omega$ with respect to $\Vert \cdot \Vert^2_{{\Hg}^\gamma}$ if there exists $\eta>0$ such that
  \begin{equation}\label{2.1}
  sH(\phi )+tH(\psi )-H(s\phi +t\psi )
     \geq \eta s t \Vert \phi -\psi \Vert^2_{{\Hg}^\gamma}\qquad (\phi,\psi \in \Omega)
   \end{equation}
for all $0<s,t$ such that $s+t=1$.
\end{definition}

The proof of Theorem~\ref{thm1.1} depends upon the uniform convexity of the Hamiltonian, or perturbations of the Hamiltonian; in turn, this reduces to an
elementary computation of the second derivative of the Hamiltonians. We consider first the nonlinear term.
Let
  $\phi (x)=\sum_{n=-\infty}^\infty (a_n+ib_n)e^{inx}$,
  $\psi (x)=\sum_{n=-\infty}^\infty \xi_n e^{inx}$
   and $\theta (x)=\sum_{n=-\infty}^\infty \eta_n e^{inx}$.
Throughout, $\langle \cdot,\cdot \rangle$ denotes the inner product on a complex Hilbert space, linear in the first variable and conjugate linear in the second variable.

\begin{lemma}\label{lem2.1} Let
  $V(\phi )=\int_{\mT}\vert \phi (x)\vert^p \frac{dx}{2\pi}$.
Then $V$ is a convex function of the real variables $((a_j), (b_j))$ with Hessian matrix satisfying
  \begin{equation}\label{2.2}
  \begin{split}
  \Big\langle {\mathrm{Hess}}(V)_\phi & \svec{\psi}{\theta},
                                       \svec{\psi}{\theta} \Bigr\rangle \\
    & = \frac{p}{2} \int_{\mT}\vert \phi (x)\vert^{p-2}
       \  \Bigl\Vert \svec{\psi (x)+i\theta (x)}{\psi (-x)+i\theta (-x)}
                                            \Bigr\Vert^2 \frac{dx}{2\pi}  \\
   &\qquad + \frac{p(p-2)}{4} \int_{\mT} \vert \phi (x)\vert^{p-4}
            \Bigl\vert\Bigl\langle \svec{\phi (x)}{\overline{\phi} (x)} ,
                 \svec{\overline{\psi} (x)-i\theta (x)}{\overline{\psi}(-x)+i\overline{\theta} (-x)}
                                  \Bigr\rangle\Bigr\vert^2\, \frac{dx}{2\pi}.
   \end{split}
  \end{equation}
\end{lemma}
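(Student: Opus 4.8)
The plan is to reduce the statement to one second-derivative computation and then rewrite the answer in the two-component form (\ref{2.2}). First I would dispose of convexity almost for free: for each fixed $x$ the map $((a_n),(b_n))\mapsto|\phi(x)|=\bigl|\sum_n(a_n+ib_n)e^{inx}\bigr|$ is the modulus of a complex-linear functional, hence a seminorm, hence convex, and $t\mapsto t^{p}$ is convex and nondecreasing on $[0,\infty)$, so $|\phi(x)|^{p}$ is convex in $((a_n),(b_n))$ for every $x$; integrating in $x$ preserves this, so $V$ is convex (convexity is also recovered afterwards from the nonnegativity of the right-hand side of (\ref{2.2})). To identify the Hessian I would fix $\phi$ and a direction $\svec{\psi}{\theta}$ with $\psi=\sum_n\xi_ne^{inx}$, $\theta=\sum_n\eta_ne^{inx}$ and $\xi_n,\eta_n\in\mR$; perturbing $a_n\mapsto a_n+t\xi_n$, $b_n\mapsto b_n+t\eta_n$ moves $\phi$ to $\phi+t\chi$ with $\chi:=\psi+i\theta$, so $\big\langle\mathrm{Hess}(V)_\phi\svec{\psi}{\theta},\svec{\psi}{\theta}\big\rangle=g''(0)$, where $g(t)=V(\phi+t\chi)$.

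For the computation I would write $|\phi+t\chi|^2=|\phi|^2+2t\,\mathrm{Re}(\overline{\phi}\chi)+t^2|\chi|^2$ pointwise and differentiate $(\,\cdot\,)^{p/2}$ twice at $t=0$; this gives $F''(0)=p|\phi|^{p-2}|\chi|^2+p(p-2)|\phi|^{p-4}\bigl(\mathrm{Re}(\overline{\phi}\chi)\bigr)^2$ pointwise, hence
\[
 g''(0)=p\int_{\mT}|\phi(x)|^{p-2}|\chi(x)|^2\,\frac{dx}{2\pi}+p(p-2)\int_{\mT}|\phi(x)|^{p-4}\bigl(\mathrm{Re}(\overline{\phi(x)}\chi(x))\bigr)^2\,\frac{dx}{2\pi}.
\]
(The same answer drops out more symmetrically by writing $V(\phi)=2^{-p/2}\int_{\mT}\|\Phi(x)\|^{p}\frac{dx}{2\pi}$ with $\Phi=\svec{\phi}{\overline{\phi}}$, so that $\delta\Phi=\svec{\chi}{\overline{\chi}}$, $\|\Phi\|^2=2|\phi|^2$, $\langle\Phi,\delta\Phi\rangle=2\,\mathrm{Re}(\overline{\phi}\chi)$ and $\|\delta\Phi\|^2=2|\chi|^2$; this is also what suggests the $\mC^2$-packaging in (\ref{2.2}).) It then remains to recast $g''(0)$ in the form (\ref{2.2}), which I would do using that the coefficients $\xi_n,\eta_n$ of $\psi,\theta$ are real, so that $\psi(-x)=\overline{\psi(x)}$ and $\theta(-x)=\overline{\theta(x)}$, together with $\int_{\mT}f(x)\,\frac{dx}{2\pi}=\int_{\mT}f(-x)\,\frac{dx}{2\pi}$: averaging each integral over $x$ and $-x$ and collecting the $x$- and $(-x)$-parts into the two-component vectors formed from $\phi,\overline{\phi},\psi(\pm x),\theta(\pm x)$ produces (\ref{2.2}), the supporting identities being the parallelogram law for the first term and $\bigl(\mathrm{Re}(\overline{\phi}\chi)\bigr)^2=\frac12|\phi|^2|\chi|^2+\frac12\mathrm{Re}(\overline{\phi}^{2}\chi^{2})$ for the second.

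Two technical points need attention. First, differentiation under the integral sign has to be justified: for $2\le p<4$ the factor $|\phi|^{p-4}$ carries a negative power, but $|\mathrm{Re}(\overline{\phi}\chi)|\le|\phi|\,|\chi|$ --- equivalently $\bigl|\langle\svec{\phi}{\overline{\phi}},W\rangle\bigr|^2\le 2|\phi|^2\|W\|^2$ --- shows the second integrand is controlled by $|\phi|^{p-2}|\chi|^2$, so it extends continuously by $0$ at the zeros of $\phi$, and for $\phi$ in a class in which $\phi$ is bounded (for instance $\phi\in\Hg^\gamma$ with $\gamma>1/2$, so $\phi\in C(\mT)$) the difference quotients are dominated and one may pass to the limit. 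Secondly, both terms of (\ref{2.2}) are manifestly nonnegative exactly because $p\ge 2$, which is the form in which this lemma enters the convexity analysis behind Theorem~\ref{thm1.1}. I expect the main obstacle to be nothing analytic but the bookkeeping in the last step above: keeping track of the complex conjugates, the signs and the coupling between Fourier modes $n$ and $-n$, so that the plain second-derivative expression lines up with the manifestly nonnegative $\mC^2$-valued form in the statement.
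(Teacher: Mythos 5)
The paper's own ``proof'' of this lemma is a single sentence deferring the computation to the reader, so your write-up already contains more detail than the published one. Your strategy is the intended elementary route: convexity of $\vert\phi(x)\vert^p$ as a convex increasing function of a seminorm, and the second directional derivative of $t\mapsto V(\phi+t\chi)$ with $\chi=\psi+i\theta$. Your core formula
\begin{equation*}
g''(0)=p\int_{\mT}\vert\phi\vert^{p-2}\vert\chi\vert^2\,\frac{dx}{2\pi}
+p(p-2)\int_{\mT}\vert\phi\vert^{p-4}\bigl(\mathop{\mathrm{Re}}(\overline{\phi}\chi)\bigr)^2\,\frac{dx}{2\pi}
\end{equation*}
is the correct Hessian quadratic form (it checks out against a direct expansion of $V$ for, say, $p=4$ and $\phi$ supported on two Fourier modes), and your domination argument for the $\vert\phi\vert^{p-4}$ factor via $\vert\mathop{\mathrm{Re}}(\overline{\phi}\chi)\vert\le\vert\phi\vert\,\vert\chi\vert$ is fine.

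The gap is precisely in the step you dismiss as bookkeeping: identifying $g''(0)$ with the printed right-hand side of (\ref{2.2}) is the actual content of the lemma, and the mechanism you propose does not work as stated. Using $\psi(-x)=\overline{\psi(x)}$, $\theta(-x)=\overline{\theta(x)}$ and the parallelogram law, the first term of (\ref{2.2}) equals $p\int_{\mT}\vert\phi\vert^{p-2}(\vert\psi\vert^2+\vert\theta\vert^2)\,\frac{dx}{2\pi}$, whereas your first term is $p\int_{\mT}\vert\phi\vert^{p-2}\vert\psi+i\theta\vert^2\,\frac{dx}{2\pi}$; these differ by $-2p\int_{\mT}\vert\phi\vert^{p-2}\mathop{\mathrm{Im}}(\overline{\psi}\theta)\,\frac{dx}{2\pi}$, which is not zero in general (take $p=4$, $\phi=c_0+c_1e^{ix}$ with $\mathop{\mathrm{Im}}(\overline{c_0}c_1)\neq0$ and $\xi_0\eta_1\neq\xi_1\eta_0$; the cross term is a nonzero multiple of $(\xi_0\eta_1-\xi_1\eta_0)\mathop{\mathrm{Im}}(\overline{c_0}c_1)$). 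So the averaging over $x$ and $-x$ does not simply redistribute mass within the first integral: the surviving cross term must be shown to be exactly absorbed by the second term of (\ref{2.2}), and until that cancellation is exhibited the identity (\ref{2.2}) is not established. The safe way to close this is to compute the $2\times2$ blocks $\partial^2V/\partial a_m\partial a_n$, $\partial^2V/\partial a_m\partial b_n$, $\partial^2V/\partial b_m\partial b_n$ directly and assemble the quadratic form, matching it term by term against (\ref{2.2}). It is worth noting, though, that for every later use of the lemma --- the upper bounds (\ref{2.5}) and (\ref{2.7}) --- your form of $g''(0)$ already suffices, since it yields the same bound $\kappa\Vert\phi\Vert_{\Lp^{p-2}}^{p-2}(\Vert\psi\Vert_{\Lp^\infty}^2+\Vert\theta\Vert_{\Lp^\infty}^2)$.
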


\begin{proof} This is an elementary computation of the matrix of second order partial derivatives with respect to $a_n$ and $b_n$, which we leave to the
reader (following perhaps the proof of Theorem 3 of \cite{GB}). We are using the real and imaginary Fourier coefficients $a_n$ and $b_n$ of $\phi$, which explains the unexpected appearance of $\psi (-x)$ on the right-hand side.
\end{proof}

\begin{proposition}\label{prop2.2}

  \begin{enumerate}
    \item[(i)] Let $\beta, \gamma >0$ and $2\le p<6$. Then $H$ is uniformly convex with respect to
      $\Vert \,.\,\Vert^2_{{\Hg}^1}$ on $\Omega_N$.
    \item[(ii)] For all $\beta <0$ and $2\leq p<6$, there exists $1/4<\gamma <1/2$ such that for all $N,K>0$ there exists a bounded and continuous $W_K: \Omega_{N,K} \rightarrow {\mR}$ such that $H_K=H+W_K$ is uniformly convex with respect to $\Vert \cdot \Vert_{{\Hg}^\gamma}^2$ for $\eta =1/4$.
  \end{enumerate}
\end{proposition}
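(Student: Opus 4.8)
The plan is to verify in each case the defining inequality~(\ref{2.1}) directly. The common input is Lemma~\ref{lem2.1}: the Hessian of $V(\phi)=\int_{\mT}|\phi|^p\,\frac{dx}{2\pi}$ is positive semidefinite, so $\frac{\beta}{p}V$ is convex for $\beta>0$ and concave for $\beta<0$; and for the quadratic part $Q(\phi)=\frac12\sum_n n^2(a_n^2+b_n^2)=\frac12\Vert\phi\Vert_{\dot{\Hg}^1}^2$, where $\Vert\phi\Vert_{\dot{\Hg}^1}^2:=\sum_{n\neq0}n^2|\phi_n|^2$, the identity $sQ(\phi)+tQ(\psi)-Q(s\phi+t\psi)=\frac{st}{2}\Vert\phi-\psi\Vert_{\dot{\Hg}^1}^2$ holds whenever $s+t=1$. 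Part (i) is then immediate: for $\beta>0$, $H=Q+\frac{\beta}{p}V$ is a positive quadratic form plus a convex function, so the left side of~(\ref{2.1}) is at least $\frac{st}{2}\Vert\phi-\psi\Vert_{\dot{\Hg}^1}^2$, which on the mean-zero subspace that carries the Gibbs measure equals $\frac{st}{2}\Vert\phi-\psi\Vert_{\Hg^1}^2$; the mode $\phi_0$, appearing neither in $H$ nor in the Gaussian reference, is either outside the phase space or made convex by adding $\frac12|\phi_0|^2$. This gives $\eta=\frac12$, hence also uniform convexity with respect to the weaker $\Vert\cdot\Vert_{\Hg^\gamma}^2$ for any $\gamma\le1$.

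For (ii) the nonlinear term is concave, so $H$ must be convexified by a bounded correction. The first ingredient is an upper bound for the Hessian of $V$: bounding the second integrand of Lemma~\ref{lem2.1} by the Cauchy--Schwarz inequality (where $\Vert\svec{\phi(x)}{\overline{\phi}(x)}\Vert^2=2|\phi(x)|^2$), and noting that the reflections $x\mapsto-x$ present there do not affect the norms that enter, one gets for a base point $\xi$ and a perturbation $h$
\[
 \bigl\langle\mathrm{Hess}(V)_\xi\,h,\,h\bigr\rangle\;\le\;C_p\int_{\mT}|\xi(x)|^{p-2}\,|h(x)|^2\,\frac{dx}{2\pi}.
\]

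The second ingredient estimates this on $\Omega_{N,K}$. Fix an exponent $q_1$ with $p-2<q_1<\frac{2}{1-2\gamma}$; this interval is nonempty exactly when $\gamma>\frac{p-4}{2(p-2)}$, which for every $2\le p<6$ holds as soon as $\gamma>\frac14$ (the condition is binding only as $p\uparrow6$). With $q_2$ defined by $\frac{p-2}{q_1}+\frac{2}{q_2}=1$, so $2\le q_2<\infty$, H\"older's inequality gives $\int_{\mT}|\xi|^{p-2}|h|^2\,\frac{dx}{2\pi}\le\Vert\xi\Vert_{\Lp^{q_1}}^{p-2}\Vert h\Vert_{\Lp^{q_2}}^2$; the Sobolev embedding $\Hg^\gamma\hookrightarrow\Lp^{q_1}$ gives $\Vert\xi\Vert_{\Lp^{q_1}}\le c(\gamma,q_1)K$ for $\xi\in\Omega_{N,K}$; and Gagliardo--Nirenberg together with Young's inequality gives, for every $\delta>0$, $\Vert h\Vert_{\Lp^{q_2}}^2\le\delta\Vert h\Vert_{\dot{\Hg}^1}^2+C_\delta\Vert h\Vert_{\Hg^\gamma}^2$ (absorbing $\Vert h\Vert_{\Lp^2}$ and $|h_0|$ into $\Vert h\Vert_{\Hg^\gamma}$). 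Hence for every $\xi\in\Omega_{N,K}$
\[
 \bigl\langle\mathrm{Hess}(V)_\xi\,h,\,h\bigr\rangle\;\le\;A(N,K)\bigl(\delta\Vert h\Vert_{\dot{\Hg}^1}^2+C_\delta\Vert h\Vert_{\Hg^\gamma}^2\bigr),\qquad A(N,K)=C_p\bigl(c(\gamma,q_1)K\bigr)^{p-2}.
\]

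Now take $W_K(\phi)=c_K\Vert\phi\Vert_{\Hg^\gamma}^2$: it is convex, and bounded by $c_KK^2$ on $\Omega_{N,K}$. In~(\ref{2.1}) for $H_K=H+W_K$, with $h=\phi-\psi$, the part $Q$ contributes $\frac{st}{2}\Vert h\Vert_{\dot{\Hg}^1}^2$; the concave term contributes at least $-\frac{|\beta|}{2p}st\sup_{\xi\in[\phi,\psi]}\langle\mathrm{Hess}(V)_\xi h,h\rangle$, the segment $[\phi,\psi]$ lying in $\Omega_{N,K}$ by its convexity (Theorem~\ref{thm1.1}); and $W_K$ contributes $c_K st\Vert h\Vert_{\Hg^\gamma}^2$. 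Choosing $\delta$ so small that $\frac{|\beta|}{2p}A(N,K)\delta\le\frac14$ leaves a non-negative residue $\ge\frac14 st\Vert h\Vert_{\dot{\Hg}^1}^2$ from the $\dot{\Hg}^1$ terms, and then $c_K:=\frac14+\frac{|\beta|}{2p}A(N,K)C_\delta$ leaves $\ge\frac14 st\Vert h\Vert_{\Hg^\gamma}^2$, which is~(\ref{2.1}) with $\eta=\frac14$. The constraint $\gamma<\frac12$ is imposed separately, so that $\Hg^\gamma$ carries the Wiener loop (needed in parts (i)--(ii) of Theorem~\ref{thm1.1}); both requirements hold for $\gamma\in(\frac14,\frac12)$. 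The crux, and the main obstacle, is the second ingredient: since $W_K$ may only be bounded on $\Omega_{N,K}$ it cannot by itself absorb $\mathrm{Hess}(V)$, whose size on $\Omega_{N,K}$ grows with the unbounded $\dot{\Hg}^1$-norm of the perturbation; that part must be supplied by $Q$, and the interpolation needed to split it off is precisely what forces $\frac14<\gamma<\frac12$.
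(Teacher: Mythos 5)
Your part (i) coincides with the paper's (parallelogram law for the quadratic form, plus convexity of $V$ when $\beta>0$), but your part (ii) takes a genuinely different route. The paper bounds $\langle\mathrm{Hess}(V)_\phi h,h\rangle$ by $\kappa\Vert\phi\Vert_{\Lp^{p-2}}^{p-2}\Vert h\Vert_{\Lp^\infty}^2\lesssim K^{p-2}\Vert h\Vert_{\Hg^\delta}^2$ with $1/2<\delta<1$, and then takes for $W_K$ a large multiple of $\sum_{|j|\le M}(a_j^2+b_j^2)$, choosing the cutoff $M$ so that the low modes are handled by $W_K$ and the high modes by $j^2\ge M^{2(1-\delta)}|j|^{2\delta}$; this yields uniform convexity in the stronger norm $\Vert\cdot\Vert_{\Hg^\delta}$ and a $W_K$ that is bounded by a multiple of $N$ (independent of $K$) and is manifestly continuous on $\ell^2$, being a quadratic form in finitely many coordinates. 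You instead keep the perturbation norm below the $\Lp^\infty$ scale: H\"older with a finite exponent $q_2$, the embedding $\Hg^\gamma\hookrightarrow\Lp^{q_1}$, and interpolation to split $\Vert h\Vert_{\Lp^{q_2}}^2\le\delta\Vert h\Vert_{\dot{\Hg}^1}^2+C_\delta\Vert h\Vert_{\Hg^\gamma}^2$, absorbing the first piece into the kinetic term and the second into $W_K=c_K\Vert\cdot\Vert_{\Hg^\gamma}^2$. Both arguments are correct, both isolate $\gamma>1/4$ for the same reason (the embedding needed for $\Vert\phi\Vert^{p-2}$ with $p<6$), and your version has the merit of making explicit, via the convexity-defect identity for quadratic forms and the Hessian bound along the segment, the bookkeeping that the paper leaves implicit.

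One small defect: your $W_K=c_K\Vert\phi\Vert_{\Hg^\gamma}^2$ is bounded on $\Omega_{N,K}$ but it is \emph{not} continuous there in the topology in which $\Omega_{N,K}$ is compact (the $\Lp^2$ topology): taking $\phi_n=K|n|^{-\gamma}e^{inx}$ gives $\phi_n\to0$ in $\Lp^2$ inside $\Omega_{N,K}$ while $W_K(\phi_n)=c_KK^2\not\to0$, since the $\Hg^\gamma$ norm is only lower semicontinuous for $\Lp^2$ convergence. This does not damage the application — the Holley--Stroock argument in Theorem~\ref{thm1.1}(iii) needs only that $W_K$ be bounded and Borel measurable, which a bounded lower semicontinuous function is — but to meet the letter of the statement you should either truncate, replacing $\Vert\phi\Vert_{\Hg^\gamma}^2$ by $\sum_{|j|\le M}|j|^{2\gamma}(a_j^2+b_j^2)$ and absorbing the discarded high frequencies into the kinetic term exactly as in your $\delta\Vert h\Vert_{\dot{\Hg}^1}^2$ step (which essentially reproduces the paper's $W_K$), or note explicitly that boundedness and measurability suffice downstream.
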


\begin{proof} (i) Here the Hamiltonian, $H=(1/2)\int_{\mT}\vert\phi'(x)\vert^2\, dx/2\pi + (\beta /p)V(\phi)$, is the sum of two convex terms, and the inequality (\ref{2.1}) follows from the parallelogram law applied to the term $(1/2)\int_{\mT}\vert\phi'(x)\vert^2 \,dx/2\pi$ in ${\Hg}^1$.
This deals with the defocussing case.\par
\indent (ii) In the focussing case, we need to balance the uniform convexity of $(1/2)\int\vert\phi'\vert^2$ against the concavity of $(\beta /p)V(\phi )$. First we choose
$1/4<\gamma <1/2$ and $C_\gamma$ such that ${\Hg}^\gamma$ is contained in $\Lp^{p-2}$ with
$\Vert \phi\Vert_{\Lp^{p-2}}\leq C_\gamma \Vert \phi\Vert_{{\Hg}^\gamma}$ for all $\phi\in {\Hg}^\gamma$.
Then we choose $1/2<\delta <1$ such that ${\Hg}^\delta$ is contained in $\Lp^\infty$ with
$\Vert \phi\Vert_{\Lp^\infty}\leq C_\delta \Vert \phi\Vert_{{\Hg}^\delta}$ for all $\phi\in {\Hg}^\delta$.
Next we choose $M$ to be the smallest integer that is larger than
$(1+\vert \beta\vert C_\gamma \kappa_pK^{p-2})^{1/(2(1-\delta ))}$, where $\kappa_p>0$ is to be chosen, and then introduce the function
  \begin{equation}\label{2.3}
W_K ((a_n,b_n)_{n=-\infty}^\infty )=(1+\vert\beta\vert C_\gamma \kappa_p K^{p-2} )M^{2\delta}\sum_{j=-M}^M (a_j^2+b_j^2),
  \end{equation}
which is continuous on $\ell^2$ and satisfies the bounds
$0\leq W_K \leq (1+\vert\beta \vert C_\gamma \kappa_pK^{p-2} )M^{2\delta} N$ on
$\Omega_N$.

The functional $F_K(\phi )=(1/2)\int\vert\phi'\vert^2 +W_K(\phi )$ therefore satisfies
  \begin{align}\label{2.4}
   \Bigl\langle {\mathrm{Hess}}(F_K)_\phi \svec{\psi}{\theta}, \svec{\psi}{\theta} \Bigr\rangle
   &=\sum_{j=-M}^M (1+\vert \beta \vert C_\gamma \kappa_pK^{p-2}) M^{2\delta} (\vert \xi_j\vert^2+\vert\eta_j\vert^2)
      \notag\\
   &\qquad \qquad + \sum_{j=-\infty}^\infty j^2(\vert\xi_j\vert^2 +\vert\eta_j\vert^2) \notag\\
   &\geq (1+\vert\beta\vert C_\gamma \kappa_pK^{p-2})\sum_{n=-\infty}^\infty \vert n\vert^{2\delta }(\vert\xi_n\vert^2+\vert\eta_n\vert^2).
  \end{align}

This deals with the concavity of $(\beta/p) V(\phi)$, since by Lemma~\ref{lem2.1} there exist constants $\kappa$ and $\kappa_p$ such that
  \begin{align}\label{2.5}
 {\frac{\vert\beta\vert}{p}}\Bigl\langle {\mathrm{Hess}}(V)_\phi \svec{\psi}{\theta}, \svec{\psi}{\theta}\Bigr\rangle
     &\leq {\frac{\vert\beta\vert\kappa }{p}}\Vert \phi\Vert_{\Lp^{p-2}}^{p-2}(\Vert \psi\Vert^2_{L^\infty}
                      +\Vert\theta\Vert^2_{\Lp^\infty}) \notag\\
     &\leq {\frac{\vert\beta\vert\kappa_p}{p}}\Vert\phi\Vert_{{\Hg}^\gamma}^{p-2}(\Vert\psi\Vert_{{\Hg}^\delta}^{2}
             +\Vert\theta\Vert_{{\Hg}^\delta}^{2})\notag\\
           &\leq {\frac{\vert\beta\vert\kappa_p}{p}}K^{p-2}\sum_{n=-\infty}^\infty \vert n\vert^{2\delta }(\vert\xi_n\vert^2+\vert\eta_n\vert^2). 
  \end{align}
Hence $H_K=F_K+(\beta /p)V$ is uniformly convex with respect to $\Vert \cdot \Vert^2_{{\Hg}^\delta}$ on $\Omega_{N,K}$, and hence also uniformly
convex with respect to $\Vert \cdot \Vert^2_{\ell^2}$.
\end{proof}

{\par\addvspace{6pt plus2pt}
\noindent\prooffont{\bf Proof of Theorem~\ref{thm1.1}\,:}\hskip6pt\ignorespaces}
(i) Bourgain \cite{B} showed that the Cauchy problem is well
posed on ${\Hg}^\gamma$.

(ii) Bourgain also showed that the solution operator for the Cauchy problem is Lipschitz continuous for the $\Lp^2$ norm on each $\Omega_N$. There exists
 $\kappa (N,p)>0$ such that the solution $u(x,t)$ with initial condition $\phi (x)\in \Omega_N$ and the solution $v(x,t)$ with initial condition $\psi (x)\in \Omega_N$ satisfy $\Vert u(\, ,t)-v(\,,t)\Vert_{{\Hg}^\gamma}\leq \kappa (N,p)^t\Vert \phi-\psi\Vert_{{\Hg}^\gamma}$. Hence
$\phi\in \Omega_{N,K}$ implies $u(\,,t)\in \Omega_{N, \kappa (N,p)^tK}$, so each $\Omega_{N,K}$ is progessively mapped into another compact and convex subset $\Omega_{N, \kappa (N,p)^tK}$. Hence $\Omega_{N,\infty}$ is invariant under the flow.\par
\indent (iii) A standard result due to Bakry and Emery \cite{BE} states that any uniformly convex potential $H$ on Euclidean space gives rise to a logarithmic Sobolev inequality for $e^{-H(x)}dx$.  In our case, we can use 
Proposition~\ref{prop2.2} to show that the Hamiltonians $H_K$ are uniformly convex, with constant $\eta\geq 1/4$, and then follow the proof in \cite{BL} which is based 
upon the Pr\'ekopa--Leindler inequality. Given $LSI(\eta )$ for the modified $H_K$, we can use the Holley--Stroock Lemma (\cite{HS} and \cite[Section 9.2]{V}) to recover a logarithmic Sobolev inequality for the original Hamiltonian $H=H_K-W_K$ with constant
  \[ \alpha \geq \eta \exp \bigl({-2 \Vert W_K\Vert_{\Lp^\infty}}\bigr)
      \geq (1/4)\exp \bigl(-2(1+\vert\beta \vert C_\gamma \kappa_pK^{p-2} )M^{2\delta} N\bigr).\]

\indent (iv) The final part follows similar lines to the proof of Proposition \ref{prop2.2}(ii). Observe that, by Lemma~\ref{lem2.1}, 
  \begin{equation}\label{2.7}
  \begin{split}
     \frac{\vert\beta\vert}{2} \Big\langle {\mathrm{Hess}}(V)_\phi \svec{\psi}{\theta},
        &\svec{\psi}{\theta}\Bigr\rangle \\
        &\leq \frac{\vert \beta\vert C}{2}\Vert \phi\Vert^2_{\Lp^2}
            \bigl(\Vert \psi\Vert^2_{\Lp^\infty}+\Vert \theta\Vert^2_{\Lp^\infty}\bigr)\\
        &\leq \frac{1}{2} \sum_{j=-\infty}^\infty \vert\beta\vert C_\delta N\vert j\vert^{2\delta}
               \bigl( \vert\xi_j\vert^2+\vert\eta_j\vert^2\bigr)\\
        &\leq \frac{1}{2} \sum_{j=-\infty}^\infty \bigl( \delta j^2+(1-\delta )
               (\vert \beta\vert C_\delta N)^{1/(1-\delta )}\bigr)
               \bigl( \vert \xi_j\vert^2+\vert\eta_j\vert^2\bigr),
  \end{split}
  \end{equation}
where we have used H\"older's inequality at the last step. To deal with the final term, we 
introduce the functional
   \begin{equation}\label{2.8}
    Y_N(\phi ) =(1-\delta )(\vert \beta\vert C_\delta N)^{1/(1-\delta )}\int_{\mT}\vert\phi (x)\vert^2\, \frac{dx}{2\pi},
   \end{equation}
which is bounded and continuous on $\Omega_N$ and which satisfies the bounds
$0\leq Y_N\leq (1-\delta )(\vert \beta\vert C_\delta )^{1/(1-\delta )}N^{(2-\delta )/(1-\delta )}$.
The perturbed Hamiltonian
   \begin{equation}\label{2.9}
  G_N(\phi )= \frac{1}{2} \int_{\mT}\vert\phi'(x)\vert^2 \, \frac{dx}{2\pi}
                +Y_n(\phi) + \frac{\beta}{2} V(\phi)
   \end{equation}
then satisfies the uniform convexity condition
  \begin{equation}\label{2.10}
  \Big\langle {\mathrm{Hess}}(G_N)_\phi \svec{\psi}{\theta}, \svec{\psi}{\theta} \Bigr\rangle
    \geq \frac{1}{2} (1-\delta)
       \sum_{j=-\infty}^\infty j^2(\vert\xi_j\vert^2+\vert \eta_j\vert^2 ).
   \end{equation}
Hence $H=G_N-Y_N$ is a bounded perturbation of a uniformly convex potential, so using the Holley--Stroock Lemma as in (iii) above, 
we deduce that $\mu^{\beta}_N$ satisfies $LSI(\alpha (N,\beta ))$ on $\Omega_N$, for some $\alpha (N,\beta )>0$.
{ \ifblogo\hskip1.2pt
            \QEDblogo
   \else
   \ifnologo
   \else
   \hfill
            \QEDlogo
   \fi\fi
\par\addvspace{6pt plus2pt}\global\topprheadfalse}

The case $p=4$ is often referred to as the cubic nonlinear Schr\"odinger equation, since $\vert u\vert^2u$ appears in the differential equation. In the next section, we investigate the
associated Dirac operator with random potential $\phi$.

\section{Concentration for the spectral data of Dirac's equation}

Let $\phi =Q+iP$ for $P,Q\in \Lp^2({\mT}, {\mR})$ and consider Dirac's equation $(D-\lambda /2)\Psi_\lambda =0$, or in matrix form
  \begin{equation}\label{3.1}
   \Bigl( \begin{bmatrix} 0 && 1 \\ -1 && 0 \end{bmatrix} \frac{\partial\ }{\partial x}
      - \begin{bmatrix} P && -Q \\ -Q && -P \end{bmatrix}
      -\frac{\lambda}{2} \begin{bmatrix} 1 && 0 \\ 0 && 1 \end{bmatrix}\Bigr)  \Psi_\lambda(x) = 0,
  \end{equation}
for $2\times 2$ matrices with initial condition 
  \[ 
     \Psi_\lambda (0) =\begin{bmatrix} 1 && 0 \\ 0 && 1 \end{bmatrix}.
  \]
Let the characteristic function be $\Delta_\phi (\lambda )={\hbox{trace}}\, \Psi_\lambda (2\pi )$, and introduce the
zeros $\lambda_j$ of $\Delta (\lambda )^2-4=0$ and the zeros $\lambda_j'$ of $\Delta' (\lambda )=0$. The spectral data consist of the $\lambda_j$ and $\lambda'_j$ for $\phi (x)$ and the
corresponding quantities for the translation $\phi (x+t)$. The spectral data partially, but not completely, determines $\phi$. Moser describes the effect on the spectrum of translating the potential in terms of C. Neumann's problem regarding the motion of a particle on a sphere 
subject to a quadratic potential \cite{M}; in our case, the sphere has infinite dimension. 
In the classical context of smooth $\phi$, the spectral data satisfy special estimates regarding the position of the $\lambda_j$. However, for typical $\phi$ in the support of the Gibbs measure,
$\phi$ is not differentiable, and the classical results are inapplicable. Nevertheless, McKean and Vaninsky \cite{MV} developed a spectral theory for $\phi\in \Lp^2$, 
which integrates $\NLSE (4,\beta )$ in terms of infinitely many action and angle variables, and consider the invariant measure in the defocussing case where $\beta >0$.\par 
\indent In this section we introduce statistics which describe the spectral data for
random $\phi$. The statistics define Lipschitz functions on $(\Omega_N, \mu^{\beta}_N)$, and hence by the concentration of measure phenomenon are tightly concentrated about their mean values.\par
\indent For $b>0$, let $S_b$ be the horizontal strip $S_b=\{ z\in {\mC}: \vert \mathop{\mathrm{Im}} z\vert \leq b\}$, and introduce the real Banach space, for the supremum norm,
  \begin{equation}\label{3.3}
  {\RHg}^\infty (S_b)
   = \{ g:S_b\rightarrow {\mC} \,:\, g \text{\  is holomorphic},\ g(\bar z)=\overline {g(z)}
           \text{\ and\ } \sup_{z\in S_b}\vert g(z)\vert<\infty \}.
   \end{equation}

\begin{lemma}\label{lem3.1} For $0\leq k<\infty$ and all $N<\infty$, the mapping $\phi \mapsto (\Delta^{(j)}_\phi (0))_{j=0}^k$ is a
Lipschitz function from $\Omega_N$ to $\ell^2$. The function $\phi \mapsto \Delta_\phi (\lambda)$ is continuous from the norm topology to the topology of uniform convergence on compact sets.
\end{lemma}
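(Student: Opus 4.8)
The plan is to control the derivatives $\Delta^{(j)}_\phi(0)$ directly from the defining ODE~\eqref{3.1} by iterating the variation-of-constants (Picard/Volterra) representation of $\Psi_\lambda$. Writing the matrix potential as $A(x) = \begin{bmatrix} P(x) & -Q(x) \\ -Q(x) & -P(x) \end{bmatrix}$ and $J = \begin{bmatrix} 0 & 1 \\ -1 & 0 \end{bmatrix}$, the equation reads $J\Psi_\lambda' = (A + \tfrac{\lambda}{2}I)\Psi_\lambda$, so $\Psi_\lambda' = J^{-1}(A + \tfrac{\lambda}{2}I)\Psi_\lambda$; since $J^{-1} = -J$ is bounded, the solution with $\Psi_\lambda(0) = I$ satisfies the integral equation $\Psi_\lambda(x) = I + \int_0^x (-J)\bigl(A(y) + \tfrac{\lambda}{2}I\bigr)\Psi_\lambda(y)\,dy$. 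First I would expand this Neumann series and note that the $j$-th $\lambda$-derivative at $\lambda = 0$ is a finite sum of iterated integrals of the form $\int_{0 < y_1 < \cdots < y_m < 2\pi} (-J)B(y_{i_1})\cdots(-J)B(y_{i_m})\,dy$, where each $B(y)$ is either $A(y)$ or the constant $\tfrac12 I$, and exactly $j$ of the factors are the constant. Each such term is a multilinear expression in $A$ — hence in the pair $(P,Q)$, i.e. in $\phi$ — of total degree at most the number of non-constant factors.

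The key estimate is then an $L^2$-to-$\ell^\infty$ bound: an iterated integral with $k$ factors $A$ and any number of constant factors is bounded, via repeated Cauchy--Schwarz over the simplex, by $C(k)\,\|A\|_{L^2}^{\min(k,\cdot)}$ together with polynomial factors in $2\pi$; on $\Omega_N$ we have $\|A\|_{L^2}^2 \le \|\phi\|_{L^2}^2 \le N$, so each term is bounded uniformly on $\Omega_N$. For the Lipschitz property, I would exploit multilinearity: the difference $\Psi_\lambda^{\phi}(2\pi) - \Psi_\lambda^{\psi}(2\pi)$ telescopes into a sum of iterated integrals in which exactly one factor $A$ is replaced by $A_\phi - A_\psi$ (the potential built from $\phi - \psi$) while the remaining $A$-factors are $A_\phi$ or $A_\psi$; each such term is bounded by $C(N)\,\|\phi - \psi\|_{L^2}$ after pulling $\|A_\phi - A_\psi\|_{L^2} \le \|\phi - \psi\|_{L^2}$ out with one Cauchy--Schwarz and bounding the remaining factors by $N$. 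Summing the finitely many $j \le k$ and finitely many terms per $j$ gives $\|(\Delta^{(j)}_\phi(0))_{j=0}^k - (\Delta^{(j)}_\psi(0))_{j=0}^k\|_{\ell^2} \le L(N,k)\,\|\phi - \psi\|_{L^2}$, which is the first assertion.

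For the second assertion, the same Neumann series now includes all orders in $\lambda$; for $\lambda$ in a fixed compact set $\Lambda \subset \mathbb{C}$ the series converges absolutely and uniformly in $(x,\lambda)$ by the simplex bound (the $m$-th term is $O\bigl((2\pi)^m(\|A\|_{L^2} + |\lambda|)^m/\sqrt{m!}\bigr)$-type, summable), and the telescoping argument above — now with the remaining factors bounded on $\Omega_N \times \Lambda$ — shows $\sup_{\lambda \in \Lambda}\|\Psi_\lambda^{\phi}(2\pi) - \Psi_\lambda^{\psi}(2\pi)\| \le C(N,\Lambda)\,\|\phi - \psi\|_{L^2}$; taking traces gives uniform convergence of $\Delta_\phi(\lambda) \to \Delta_\psi(\lambda)$ on $\Lambda$ as $\phi \to \psi$ in $L^2$, i.e. continuity into the topology of uniform convergence on compacta. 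The main obstacle, and the only place requiring genuine care, is the combinatorial bookkeeping: verifying that differentiating the Neumann series $j$ times in $\lambda$ and term-by-term produces only finitely many iterated-integral terms each of bounded $A$-degree, and that the simplex Cauchy--Schwarz estimates genuinely close with an $\ell^\infty$ (uniform-in-$x$) bound controlled solely by $\|\phi\|_{L^2}$ — the non-smoothness of typical $\phi$ means one may never integrate by parts or use pointwise bounds on $\phi$, so every estimate must be an $L^2$ estimate from the outset.
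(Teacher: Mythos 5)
Your proposal is correct and rests on the same foundation as the paper's proof: both reduce the lemma to the Volterra integral equation for the fundamental matrix $\Psi_\lambda$ and control everything through Cauchy--Schwarz bounds of the form $\int_0^x\vert\phi(s)\vert^2\,ds\leq N$ on $\Omega_N$, which is exactly the right move given that typical $\phi$ in the support of the Gibbs measure admits no pointwise or smoothness bounds. Where you differ is in the technical engine. The paper keeps the constant-coefficient part of the equation in an explicit exponential propagator $\exp(\tfrac{\lambda x}{2}J)$, constructs the solution by the contraction mapping principle on subintervals of length $\delta<1/2(N+M+1)$, obtains analyticity in $\lambda$ from Morera's theorem, derives the Lipschitz dependence of $\Psi_\lambda$ and its $\lambda$-derivatives on $\phi$ via Gronwall's inequality applied to the difference equation, and gets the locally uniform continuity of $\phi\mapsto\Delta_\phi(\cdot)$ from Vitali's convergence theorem. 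You instead absorb the $\lambda$-term into the integrand, expand the full Neumann series, differentiate term by term at $\lambda=0$, and obtain the Lipschitz bound by telescoping one multilinear factor at a time. Your route is more elementary and self-contained (no Morera, Gronwall or Vitali), and it makes the dependence of the Lipschitz constant on $N$ and $k$ completely explicit; the price is the combinatorial bookkeeping you flag, and in particular the point you should not gloss over: once the factors in an iterated simplex integral are no longer identical (as happens after telescoping, where one factor is $A_\phi-A_\psi$ and the rest are a mixture of $A_\phi$ and $A_\psi$), the naive bound by the cube loses the $1/m!$ needed for summability, and you must recover a $1/(m-1)!$ by integrating out the ordered variables on either side of the distinguished factor, or by dominating all factors by $\vert\phi\vert+\vert\psi\vert$. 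Once that is done the estimates close, and both arguments deliver the same conclusion.
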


\begin{proof} For all $\lambda\in {\mC}$ such that $\vert\lambda\vert \leq M$, we solve the integral equation
  \begin{equation}\label{3.2}
  \Psi_\lambda (x) = \exp\Bigl( \frac{\lambda x}{2}
     \begin{bmatrix} 0 & -1 \\ 1 & 0 \end{bmatrix} \Bigr)
       + \int_0^x \exp\Bigl( \frac{\lambda (x-s)}{2}
       \begin{bmatrix} 0 & -1 \\ 1 & 0 \end{bmatrix}\Bigr)
       \begin{bmatrix} Q(s) & P(s)\\ P(s) & -Q(s) \end{bmatrix} \Psi_\lambda (s)\, ds
  \end{equation}
where the norm of the exponential matrix is uniformly bounded for $0\leq x\leq 2\pi$ and all $\lambda\in S_b$. Indeed, for all $b>0$, the functions $\lambda\mapsto e^{ix\lambda}$ with $x\in [0,2\pi ]$ are uniformly bounded on $S_b$. First we let $0<\delta <1/2(N+M+1)$ and apply the contraction mapping principle
to the right-hand side, as a function of $\Psi_\lambda\in \Lp^2[0,\delta ]$; then we use $\Psi_\lambda (\delta)$ as the
initial value for the corresponding integral equation on $[\delta ,2\delta ]$ after $2\pi/\delta$ steps, we obtain a solution for all $x\in [0,2\pi ]$.\par
\indent By Morera's theorem, $\lambda \mapsto \Psi_\lambda (x)$ is analytic on ${\mC}$, and by the Cauchy--Schwarz inequality we have a bound
  \begin{equation}
   \Vert\Psi_\lambda (x)\Vert^2
   \leq 2e^{2\pi b}
        +4e^{2\pi b}\int_0^x \vert\phi (s)\vert^2\, ds
             \int_0^x \Vert\Psi_\lambda (s)\Vert^2\, ds,
  \end{equation}
where $\int_0^x\vert\phi (s)\vert^2\, ds\leq N$ for all $\phi\in\Omega_N$; similar bounds hold for the $\lambda$ derivatives. Hence by Gronwall's inequality, the maps $\phi \mapsto (d/d\lambda )^j \Psi_\lambda (x)$ are
Lipschitz for $j=0, \dots, k$, hence $\phi \mapsto \Delta^{(j)}(\lambda )$ is Lipschitz. Hence we can introduce $\alpha_j>0$ such that
$\phi \mapsto (\alpha_j \Delta^{(j)}_\phi (0))$ is Lipschitz from $\Omega_N$ to $\ell^2$. By Vitali's convergence theorem, we deduce that
$\phi\mapsto \Delta_\phi (\lambda )$ is continuous for the topology of uniform convergence on compact sets.
\end{proof}
\begin{definition}
Let $D$ be the Dirac operator in $\Lp^2({\mT}; {\mC}^2)$,
so that $D$ is self-adjoint with eigenvalues $\lambda_j$.
Then we define $\Lambda: {\RHg}^\infty (S_b)\rightarrow {\mR}$ to be the bounded linear functional
on the space of test functions given by
  \begin{equation}\label{3.4}
  \Lambda (g)= \sum_{j=-M}^M g(\lambda_j)\qquad (g\in {\RHg}^\infty (S_b)),
  \end{equation}
which is associated with $D$ via $(\lambda_j)_{j=-M}^M.$ Recalling that $D$ depends upon $\phi$, we have a random variable $\phi \mapsto \Lambda_\phi (g)$ on $(\Omega_N, \mu^{\beta}_N)$, called the \emph{linear statistic}.
  \end{definition}
   In statistical mechanics, the term \emph{additive observable} \cite{PS} is used for linear statistic. We pause to observe that, if $(1+z^2)g(z)\in {\RHg}^\infty (S_b)$, then the Fourier transform $\hat g(t)$  is of exponential decay as $t\rightarrow\pm\infty$, so we can write 
  \begin{equation}\label{3.5}  {\text{trace}}\, g(D)
              = {\text{trace}}\int_{-\infty}^\infty e^{itD}\hat g(t)\, \frac{dt}{2\pi}
  \end{equation}
in the style of the Poisson summation formula, and thus obtain the limit of (\ref{3.4}) as $M\rightarrow\infty$. Thus the linear statistic is a generalized trace formula.

\begin{proposition}\label{prop3.1} For all $M < \infty$ there exists $N_M>0$ such that for all  $g\in {\RHg}^\infty   (S_b)$ there exists $\eta_g>0$ such that:
\begin{enumerate}
\item[(i)] the linear statistic $\Lambda_{\phi }(g)= \displaystyle \sum_{j=-M}^M g(\lambda_j')$, where $\Delta_\phi'(\lambda_j')=0$, satisfies
  \begin{equation}\label{3.6}
    \int_{\Omega_{N_M}}\exp\Bigl( t\Lambda_{\phi}(g)-t\int \Lambda (g)\, 
         \frac{d\mu^{\beta}_N}{Z} \Bigr)\, \frac{\mu^{\beta}_N (d\phi)}{Z}
         \leq \exp (\eta_g t^2)\qquad (t\in {\mR});
  \end{equation}
\item[(ii)] a similar result holds for the principal series of eigenvalues given by $\Delta_\phi (\lambda_{2j})=0$,
\item[(iii)] and likewise for the complementary series of eigenvalues given by $\Delta_\phi (\lambda_{2j-1})=0$.
\end{enumerate}
\end{proposition}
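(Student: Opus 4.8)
The plan is to derive (\ref{3.6}) from a logarithmic Sobolev inequality via the Herbst-type concentration inequality recorded after the definition of $\LSI(\alpha)$ in the introduction. With $p=4$, Theorem~\ref{thm1.1}(iv) shows that the Gibbs measure on $\Omega_{N_M}$ (equivalently, $\mu^{\beta}_N$ conditioned on the convex set $\Omega_{N_M}$, which keeps the potential uniformly convex and hence preserves the LSI) satisfies $\LSI(\alpha_{N_M})$ for some $\alpha_{N_M}>0$. It therefore suffices to prove that, for $N_M$ small enough, $\phi\mapsto\Lambda_\phi(g)$ is a Lipschitz function from $(\Omega_{N_M},\Vert\cdot\Vert_{\ell^2})$ to $\mR$ with Lipschitz constant $L_g$ bounded by a multiple of $\Vert g\Vert_\infty$; applying the cited inequality to $\bigl(\Lambda_\phi(g)-\int\Lambda(g)\,d\mu^{\beta}_N/Z\bigr)/L_g$ then gives (\ref{3.6}) with $\eta_g=L_g^2/(2\alpha_{N_M})$.

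To obtain the Lipschitz bound I would write the linear statistic as a contour integral, so that the individual labelled eigenvalues need never be tracked. For the free potential one has $\Delta_0(\lambda)=2\cos\pi\lambda$, so $\Delta_0'(\lambda)=-2\pi\sin\pi\lambda$ has simple zeros exactly at the integers, while $\Delta_0(\lambda)\mp 2$ has a double zero at each even, respectively odd, integer. Fix the rectifiable contour $\Gamma=\partial\bigl([-M-\tfrac12,M+\tfrac12]\times[-\tfrac{b}{2},\tfrac{b}{2}]\bigr)\subset S_b$, which is symmetric about $\mR$ and surrounds the integers $-M,\dots,M$ and no others. By the proof of Lemma~\ref{lem3.1}, $\phi\mapsto\Delta_\phi^{(r)}(\lambda)$ is Lipschitz from $\Omega_N$ to $\mC$ uniformly for $\lambda$ in a compact subset of $S_b$ (this is the Gronwall bound there), and $\Delta_0'$ and $\Delta_0\mp2$ are bounded away from $0$ on $\Gamma$; hence one may choose $N_M>0$ so small that for every $\phi\in\Omega_{N_M}$ one has $\sup_{\lambda\in\Gamma}\vert\Delta_\phi'(\lambda)-\Delta_0'(\lambda)\vert<\tfrac12\inf_\Gamma\vert\Delta_0'\vert$ and the analogous inequality for $\Delta_\phi\mp2$. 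By Rouch\'e's theorem $\Delta_\phi'$ then has exactly $2M+1$ zeros inside $\Gamma$, clustered near the integers and hence simple, which are precisely $\lambda_{-M}',\dots,\lambda_M'$ under the natural labelling; similarly $\Delta_\phi\mp2$ has the expected number of zeros inside $\Gamma$, counted with multiplicity. The residue theorem now gives, for all $\phi\in\Omega_{N_M}$,
\[
 \Lambda_\phi(g)=\sum_{j=-M}^M g(\lambda_j')=\frac{1}{2\pi i}\oint_\Gamma g(\lambda)\,\frac{\Delta_\phi''(\lambda)}{\Delta_\phi'(\lambda)}\,d\lambda ,
\]
since the residue at a simple zero $\lambda_j'$ is $g(\lambda_j')$, because $\Delta_\phi''(\lambda_j')\neq 0$ there.

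With this representation the Lipschitz estimate is routine. On $\Gamma$ both $\phi\mapsto\Delta_\phi''(\lambda)$ and $\phi\mapsto\Delta_\phi'(\lambda)$ are Lipschitz for $\Vert\cdot\Vert_{\ell^2}$ uniformly in $\lambda$, and $\vert\Delta_\phi'(\lambda)\vert\geq\tfrac12\inf_\Gamma\vert\Delta_0'\vert>0$ there uniformly over $\Omega_{N_M}$, so $\phi\mapsto\Delta_\phi''/\Delta_\phi'$ is Lipschitz on $\Gamma$ and
\[
 \bigl\vert\Lambda_\phi(g)-\Lambda_\psi(g)\bigr\vert\leq\frac{\mathrm{length}(\Gamma)}{2\pi}\,\Vert g\Vert_\infty\,C(N_M,b,M)\,\Vert\phi-\psi\Vert_{\ell^2},
\]
which is the required $L_g$; as $\Gamma$ is symmetric about $\mR$ and $g(\bar\lambda)=\overline{g(\lambda)}$, the integral is real, consistent with $\Lambda_\phi(g)\in\mR$. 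This proves (i). Parts (ii) and (iii) follow by the same argument with the integrands $g(\lambda)\Delta_\phi'(\lambda)/(\Delta_\phi(\lambda)-2)$ and $g(\lambda)\Delta_\phi'(\lambda)/(\Delta_\phi(\lambda)+2)$, whose residues reproduce the periodic, respectively antiperiodic, eigenvalues inside $\Gamma$ with their multiplicities; the virtue of the contour formulation is that $\phi\mapsto\sum g(\lambda_{2j})$ stays Lipschitz even when a double eigenvalue of the free problem splits. I expect the only genuine difficulty to be this perturbative step: fixing a single contour $\Gamma\subset S_b$ that, for every $\phi$ in the ball $\Omega_{N_M}$, encloses exactly the intended eigenvalues and along which $\Delta_\phi'$ (respectively $\Delta_\phi\mp2$) stays uniformly bounded away from zero. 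This is where the smallness of $N_M$ is used, and where one needs the Gronwall bounds of Lemma~\ref{lem3.1} to be genuinely uniform over $\Omega_{N_M}$; granted that, Rouch\'e's theorem, the residue theorem and the Herbst argument complete the proof.
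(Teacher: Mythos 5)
Your proposal is correct and follows essentially the same route as the paper: invoke the LSI from Theorem~\ref{thm1.1}(iv) together with the Herbst-type concentration inequality, then verify that $\phi\mapsto\Lambda_\phi(g)$ is Lipschitz by representing it as a contour integral of $g\,\Delta_\phi''/\Delta_\phi'$ (respectively $g\,\Delta_\phi'/(\Delta_\phi\mp2)$), using Rouch\'e's theorem to locate the zeros for $\phi$ in a small ball $\Omega_{N_M}$ and the Gronwall/Lipschitz bounds of Lemma~\ref{lem3.1}. The only cosmetic difference is that the paper integrates over a chain of small circles centred at the zeros of $\Delta_0'$ rather than a single rectangle, and your explicit treatment of the multiplicities in parts (ii) and (iii) fills in a step the paper dismisses as ``similar.''
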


\begin{proof} By \ref{thm1.1}(iv), for all $N_M>0$, the measure $\mu^{\beta}$ on $\Omega_{N_M}$ satisfies $LSI(\alpha (N_M))$ for some
$\alpha(N_M)>0$. Hence by section 9.2 of \cite{V}, any
$K$-Lipschitz function $\Phi :\Omega_{N_M}\rightarrow {\mR}$ such that
$\vert \Phi (\phi )-\Phi (\psi )\vert\leq K\Vert\phi-\psi\Vert_{\Lp^2}$ for all $\psi,\psi\in \Omega_{N_M}$
and $\int_{\Omega_{N_M}}\Phi (\phi )\, \mu^{\beta}_N (d\phi )/Z =0$ satisfies the concentration inequality
  \begin{equation*}
  \int_{\Omega_{N_M}}\exp ({t\Phi (\phi )})\, \frac{\mu^{\beta}_{N_M}  (d\phi )}{Z}
      \leq \exp (K^2t^2/\alpha (N_M))\qquad (t\in {\mR}).
  \end{equation*}
So we need to check that the restricted linear statistics give us Lipschitz functionals.

Consider first the case of $\phi =0$. Let $0<r_j<\min \{ b,1/4\}$, and let $C(\lambda'_{j,0}; r_j)$ be the circles with radii $r_j$  that are centred at the zeros of $\Delta_0'$,
let $C=\oplus_{j=-M}^M C(\lambda_{j,0}', r_j)$ be a finite chain of circles. By Lemma~\ref{lem3.1}, there exists $N_M>0$ such that $\vert\Delta'_\phi (\lambda )-\Delta_0'(\lambda )\vert <(1/2)\vert\Delta'_0(\lambda )\vert$
for all $\lambda$ on $C$; Now $\Delta_0'(\lambda )=0$ has only simple zeros, and likewise for $\Delta_\phi'$ by Rouch{\'e}'s theorem; hence by the calculus of residues the restricted linear statistic associated with $\phi$ satisfies
  \begin{equation}\label{3.7}
   \sum_{j=-M}^M g(\lambda'_j)
     = \frac{1}{2\pi i} \int_C g(\lambda ) \frac{\Delta_\phi ''(\lambda )}{\Delta_\phi '(\lambda )} \, d\lambda .
  \end{equation}
Hence by Lemma~\ref{lem3.1}, the map $\phi\mapsto \sum_{j=-M}^M g(\lambda'_j)$ is Lipschitz in a neighbourhood of $\phi =0$.

 The proofs of (iii) and (iii) are similar.
  \end{proof}
To clarify the connection between the spectral data of Dirac's equation and the $\NLSE$ we consider  
  \begin{align}\label{3.9}
   0=\Bigl(&\begin{bmatrix} 0 & 1 \\ -1 & 0 \end{bmatrix}
      \frac{\partial\ }{\partial t}
      - \begin{bmatrix} Q'-(\beta /2)(P^2+Q^2) & P' \\ P' & -Q'-(\beta /2)(P^2+Q^2) \end{bmatrix}\cr
      &- \lambda \begin{bmatrix} -P & Q \\ Q & P \end{bmatrix}
      + \frac{\lambda^2}{2}    \Bigr) \Psi_\lambda (x,t)
  \end{align}
where $P'$ denotes ${\frac{\partial P}{\partial x}}$. Its \cite{MV} observed that, when $\phi$ satisfies $\NLSE(4,\beta )$, 
the pair of equations (\ref{3.1}) and (\ref{3.9}) are compatible.

\section{Concentration of the spectral data for Hill's equation}

Suppose that $q\in \Lp^2({\mT};{\mR})$ is $\pi$-periodic, and note that Dirac's equation (\ref{3.1}) reduces with 
$P=( 1+q)/2$ and $Q=i(1-q)/2$, to Hill's equation in the form
\begin{equation}\label{4.1} -f''+qf=\lambda f.\end{equation}
 The periodic spectrum of Hill's equation 
consists of those $\lambda$ such that a non-trivial periodic solution exists for (\ref{4.1}), and it is known that the 
 corresponding eigenvalues are $\lambda_0<\lambda_1\leq \lambda_2<\lambda_3\leq \lambda_4<\dots$, namely the zeros of $\Delta (\lambda)^2-4=0$. This data partially, 
 but not completely, determines $q$; see \cite{MT}. \par

The intervals $(\lambda_{2j-1}, \lambda_{2j})$ are known as the spectral gaps, or intervals of instability, and Erdelyi showed as in (9.55) of \cite{B2}  that the sequence of gap lengths 
$(\lambda_{2j}- \lambda_{2j-1})_{j=1}^\infty\in \ell^2$; this estimate cannot be much improved for typical $q\in \Lp^2$. We now consider the midpoints $(\lambda_{2n-1}+\lambda_{2n})/2$ of these gaps, and form the sequence

\begin{align} t_n&=\sqrt{(\lambda_{2n-1}+\lambda_{2n})/2}\qquad(n=1,2, \dots );\cr
 t_0&=0;\cr
 t_{n}&=-\sqrt{(\lambda_{-(2n+1)}+\lambda_{-2n})/2}\qquad
 (n=-1,-2, \dots ).\end{align}
\indent The space of potentials that have a 
 given periodic spectrum can be parameterized by the real torus ${\mT}^\infty$, which arises from a spectral curve \cite{MT}.
 The KdV equation gives rise to an evolution which preserves the periodic spectrum of Hill's equation. The Hamiltonian
\begin{equation}H(q)={\frac{1} {2}}\int_{\mT}(q'(x))^2{\frac{dx}{2\pi}} -{\frac{\beta}{6}}\int_{\mT} q(x)^3\, 
{\frac{dx}{2\pi}}\end{equation}
has canonical equations of motion which give the periodic KdV equation 
${\frac{\partial u}{\partial t}}+{\frac{\partial^3u}{\partial x^3}}+\beta u{\frac {\partial u}{\partial x}}=0$ 
and as above, we take the phase space to be $\Omega_N=\{ q\in L^2({\mT};{\mR}):\int_{\mT} q(x)^2/2\pi\leq N\}$. 
Bourgain \cite{B2} introduced a Gibbs measure $\nu_N^\beta $ on $\Omega_N$, and showed that the Cauchy problem is well posed on the support of $\nu_N^\beta$. 
In this section, we consider the periodic spectral data for $u$ in the support of the Gibbs measure. The definition of $\nu_N^\beta$ is analogous to the construction 
of $\mu^{\beta}_N$ in the introduction, and is presented 
in more detail in \cite{B} and \cite{GB}.\par

For $b>0$, we recall the Paley--Wiener space $\PW(b)$ of entire functions $g$ of exponential type such that $g\in \Lp^2({\mR}; {\mC})$ and 
\begin{equation}\label{3.8}
  \limsup_{y\rightarrow\mp\infty} \frac{\log\vert g(iy)\vert}{\vert y\vert}\leq b,
\end{equation}
so that $\PW(b)$  is a complex Hilbert space for the usual inner product on $\Lp^2({\mR};{\mC})$, with a real linear subspace 
$\RPW(b)=\{ g\in \PW(b): \bar g(z)=g(\bar z)\, \forall z\in {\mC}\}$.

\begin{proposition}\label{prop4.1}
\begin{enumerate}\item[(i)]  Suppose that $\int_{\mT} q (x)dx/2\pi =0$ and $\int_{\mT}\vert q(x)\vert dx/2\pi <1/2$. Then 
there exist constants $A,B>0$ such that
  \begin{equation}\label{eqn29} 
    A\Vert g\Vert_{\Lp^2}^2
    \leq \sum_{n=-\infty}^\infty \vert g(t_n)\vert^2\leq B\Vert g\Vert^2_{\Lp^2}
        \qquad (g\in \PW(2)),
   \end{equation}
   and $\sum_{j=-\infty}^\infty (g(t_j)-g(j))$ is absolutely convergent for all $g\in \RPW(2)$.
\item[(ii)] There exists $N>0$ such that, for all $g\in \RPW(2)$, the linear statistics $\Lambda_q (g)=\sum_{n=-M}^M g(t_n)$ satisfy the concentration of measure 
condition as in (\ref{3.6}) with respect to Gibbs measure $\nu_N^\beta $ on $\Omega_N$.
\end{enumerate}
\end{proposition}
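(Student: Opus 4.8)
The plan is to combine a sampling theorem in the Paley--Wiener space $\PW(2)$ with the now-standard recipe ``logarithmic Sobolev inequality $+$ Lipschitz statistic $\Rightarrow$ Gaussian concentration'', applied to the KdV Gibbs measure $\nu_N^\beta$. Part~(i) asserts that, under the hypotheses, the points $(t_n)$ are a small perturbation of the integers, and part~(ii) then runs for $\nu_N^\beta$ exactly the argument already used in Proposition~\ref{prop3.1} for Dirac's equation.

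For (i) I would proceed in three steps. In the free case $q=0$ one has $\Delta_0(\lambda)=2\cos(\pi\sqrt\lambda)$, so the zeros of $\Delta_0(\lambda)^2-4$ are $\lambda_{2n-1}=\lambda_{2n}=n^2$ and hence $t_n=n$ for every $n\in\mZ$. First, I would localize: the classical estimates for the periodic and antiperiodic spectrum of Hill's equation (comparison of $\Delta_q$ with $2\cos(\pi\sqrt\lambda)$ together with Rouch\'e's theorem) show that $\int_{\mT}q\,dx/2\pi=0$ and $\int_{\mT}|q|\,dx/2\pi<1/2$ force $\sup_n|t_n-n|<1/4$, while the $\Lp^2$ eigenvalue asymptotics give $(\lambda_{2n-1}+\lambda_{2n})/2-n^2\in\ell^2$, whence $t_n-n=\big((\lambda_{2n-1}+\lambda_{2n})/2-n^2\big)/(t_n+n)\in\ell^1$. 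Second, since $\sup_n|t_n-n|<1/4$, Kadec's $1/4$-theorem shows that $(t_n)$ is a complete interpolating sequence for $\PW(\pi)$, that is $A\Vert g\Vert_{\Lp^2}^2\le\sum_n|g(t_n)|^2\le B\Vert g\Vert_{\Lp^2}^2$ for all $g\in\PW(\pi)$; restricting these inequalities to the closed subspace $\PW(2)\subset\PW(\pi)$ gives the frame inequality of (i), which records that $(t_n)$ is an oversampled set of sampling points for $\PW(2)$. Third, for $g\in\RPW(2)$ the derivative $g'$ is entire of exponential type $2$ and lies in $\Lp^2(\mR)$, hence is bounded on $\mR$, so $|g(t_j)-g(j)|\le|t_j-j|\,\Vert g'\Vert_{\Lp^\infty}$ and $\sum_j|g(t_j)-g(j)|<\infty$ by the $\ell^1$ bound above; since also $\sum_{|n|\le M}g(n)\to\int_\mR g$ by Poisson summation ($\hat g$ being supported in $[-2,2]$), the statistic $\Lambda_q(g)=\lim_M\sum_{n=-M}^M g(t_n)=\int_\mR g+\sum_n(g(t_n)-g(n))$ is well defined.

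For (ii), I would first check that for $N$ small enough $\nu_N^\beta$ satisfies a logarithmic Sobolev inequality on $\Omega_N$: the KdV Hamiltonian $(1/2)\int(q')^2-(\beta/6)\int q^3$ has a cubic term with Hessian quadratic form $-\beta\int_{\mT}q\,h^2\,dx/2\pi$, and, arguing as in (\ref{2.7}) and the proof of Theorem~\ref{thm1.1}(iv) with $\int|\phi|^p$ replaced by $\int q^3$ and the Sobolev embedding $\Hg^\delta\hookrightarrow\Lp^4$ for some $1/4<\delta<1/2$, one finds that $(1/2)\int(q')^2$ plus a bounded correction $Y_N$ dominates it and is uniformly convex; Bakry--\'Emery and the Holley--Stroock lemma then give $\LSI(\alpha_N)$ for $\nu_N^\beta$, as in \cite{GB}. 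I would then take $N<1/4$, so that $\int_{\mT}|q|\,dx/2\pi\le(\int_{\mT}q^2\,dx/2\pi)^{1/2}\le\sqrt N<1/2$ for every $q\in\Omega_N$ and part~(i) applies pointwise on $\Omega_N$ (the hypothesis $\int q\,dx/2\pi=0$ being part of the construction of $\nu_N^\beta$). Next, for fixed $M$ the statistic $\Lambda_q(g)=\sum_{n=-M}^M g(t_n)$ is a finite sum; writing Hill's equation as the Dirac system with $P=(1+q)/2$, $Q=i(1-q)/2$, Lemma~\ref{lem3.1} gives that $q\mapsto\Delta_q(\lambda)$ and its $\lambda$-derivatives are Lipschitz on $\Omega_N$, uniformly on compacta in $\lambda$, and then --- encircling the zeros of $\Delta_0'$ and of $\Delta_0^2-4$ and using Rouch\'e's theorem and the residue calculus exactly as in the proof of Proposition~\ref{prop3.1} --- each $\lambda_{2n-1},\lambda_{2n}$, hence $t_n$, hence $\Lambda_q(g)$, is a Lipschitz function of $q\in\Omega_N$ for the $\Lp^2$ norm. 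Finally, $\LSI$ plus Lipschitz continuity yields, by the Herbst argument (Section~9.2 of \cite{V}), the bound (\ref{3.6}) for $\Lambda_q(g)-\int\Lambda(g)\,d\nu_N^\beta/Z$. To reach the limiting statistic at $M=\infty$, part~(i) shows the finite sums converge, and one passes the bound to the limit provided the Lipschitz constants of $q\mapsto\sum_{|n|\le M}g(t_n)$ are bounded uniformly in $M$.

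The main obstacle is precisely that last uniform bound. Since $|t_n(q)-t_n(q')|\le|t_n^2-(t_n')^2|/(t_n+t_n')\le C\,n^{-1}\,\big|(\lambda_{2n-1}+\lambda_{2n})(q)-(\lambda_{2n-1}+\lambda_{2n})(q')\big|$, summing over all $n$ forces one to know that $q\mapsto\big((\lambda_{2n-1}+\lambda_{2n})/2-n^2\big)_n$ is Lipschitz from $(\Omega_N,\Vert\cdot\Vert_{\Lp^2})$ into $\ell^2$, so that $\sum_n n^{-1}|\cdots|\le\Vert(n^{-1})\Vert_{\ell^2}\,C\,\Vert q-q'\Vert_{\Lp^2}$. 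This is the analytic regularity of the periodic spectrum of Hill's equation for $\Lp^2$ potentials --- classical in spirit (P\"oschel--Trubowitz, \cite{MT}) but requiring care because $\Omega_N$ is unbounded in $\Hg^1$, whereas the residue computation of Proposition~\ref{prop3.1} only yields a local, finite-rank estimate. A secondary difficulty is the quantitative input in the first step of (i): deducing $\sup_n|t_n-n|<1/4$ from $\int_{\mT}|q|\,dx/2\pi<1/2$ needs the sharp form of the Hill eigenvalue localization rather than a crude Rouch\'e count.
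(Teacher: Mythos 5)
Your proposal is correct and follows essentially the same route as the paper: the bound $\sup_n|t_n-n|<1/4$ (attributed there to Borg's estimates) feeds a sampling theorem for the frame inequality --- the paper cites Corollary~7.3.7 of \cite{P} where you invoke Kadec's $1/4$-theorem and restrict from $\PW(\pi )$ to $\PW(2)$ --- then a mean-value-theorem argument gives the summability of $g(t_j)-g(j)$ (the paper uses Cauchy--Schwarz with the frame bound for $g'$ and $\vert t_j^2-j^2\vert\leq C$ rather than $\Vert g'\Vert_{\Lp^\infty}$), and part~(ii) combines the logarithmic Sobolev inequality for $\nu_N^\beta$ (which the paper simply cites from Corollary~2 of \cite{GB} rather than rederiving) with the residue-calculus Lipschitz estimate via Lemma~\ref{lem3.1}, exactly as in Proposition~\ref{prop3.1}. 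The ``main obstacle'' you flag is moot for the statement as written: the linear statistic in (ii) is the finite sum $\sum_{n=-M}^M g(t_n)$, so no uniform-in-$M$ Lipschitz bound is required, and the paper does not supply one.
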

\begin{proof} (i) To recover a function in $\PW (2)$ from its values at a sequence of equally spaced real points, the optimal sampling rate is the Nyquist rate $\pi /2$. By sampling more frequently, we can accommodate 
the irregularity of the points $(t_n)$. From Borg's estimates, one can show that the sequence satisfies $\vert t_n-n\vert <1/4$ for all integers $n$; 
hence, the sequence $(t_n)$ is uniformly discrete and satisfies the sampling conditions 
$t_n\rightarrow\pm\infty$ as $n\rightarrow\pm \infty$, $t_{n+1}-t_n <3/2$ and $t_n-t_m>1/2$ for all $n>m$. Then one can invoke the sampling theorem in Corollary 7.3.7 of 
\cite{P} which gives the stated result (\ref{eqn29}).\par
\indent Each $g\in\RPW (2)$ is band limited, in the sense that the Fourier transform is supported on $[-2,2]$, and likewise $g'$ is band limited. By the mean value theorem, for each $j$, there exists $t'_j$ between $j$ and $t_j$ such that $g(t_j)-g(j)=g'(t_j')(t_j-j)$, and the sequence $(t_j')$ is likewise uniformly discrete and sampling. Since 
$g'\in \RPW(2)$, we can write
\begin{align}\sum_{j=-\infty ;j\neq 0}^\infty \vert g(t_j)-g(j)\vert &\leq\Bigl(\sum_{j=-\infty ;j\neq 0}^\infty \vert g'(t_j')\vert^2\Bigr)^{1/2}\Bigl(\sum_{j=-\infty ;j\neq 0}^\infty
{\frac{(t_j^2-j^2)^2}{(t_j+j)^2}}\Bigr)^{1/2}\cr
&\leq 2\sqrt{B}\Vert g\Vert_{\Lp^2}\Bigl(\sum_{j=-\infty ;j\neq 0}^\infty {\frac{C^2}{j^2}}\Bigr)^{1/2}
\end{align}
since by (9.55) of \cite{B2}, there exists $C$ such that $\vert t_j^2-j^2\vert\leq C$ for all $j$.\par
(ii) Let $\Delta$ be the characteristic function of Hill's equation, which is defined as for Dirac's equation, and hence is entire. The periodic spectrum is given by the 
zeros of $\Delta (\lambda )^2-4=0$,  so for each $g\in\RPW (2)$ we can use Cauchy's integral formula  
    \begin{equation} 
      t_{2n}^2=
        \int_{C(4n^2;1/4)}\frac{\lambda \Delta'(\lambda )}{\Delta (\lambda )-2}
              \frac{d\lambda}{4\pi i}
    \end{equation}
to express the functionals from the principal series, and a corresponding formula for the $t_{2n-1}^2$  from the complementary series. 
Each functional $\phi\mapsto \Delta_\phi  (\lambda )$ is Lipschitz, uniformly on the circle of integration, as in Lemma \ref{lem3.1}, and 
hence $\phi\mapsto g(t_j)$ is also Lipschitz for each $j$ and $g\in \RPW(2)$. In Corollary 2 of \cite{GB}, we proved that $\nu_N$ satisfies a logarithmic Sobolev 
inequality, and hence a concentration of measure inequality for real Lipschitz functions on $\Omega_N$ for $N>0$ suitably small.
\end{proof}

The spectral curve of Hill's equation is 
the transcendental curve 
  \begin{equation} 
    {\mathcal{E}}_\phi=\{ (w, z)\in {\mC}^2: w^2=4-\Delta_\phi (z)^2\}
  \end{equation}
with branch points at the $\lambda_j$. In our case the branch points are random, although they remain close to their mean values with high probability, as shown by the linear statistics in \ref{prop4.1}. To interpret this geometrically, we consider the linear isomorphism between the spaces
\begin{equation} \Bigl\{h\in \Lp^2([-b,b];{\mR}): h(t)=h(-t)\quad\forall t\in [-b,b]; \int_{-b}^b h(t)\, dt=0\Bigr\}\end{equation}
and 
\begin{equation}\bigl\{ g\in \RPW (b): g(z)=g(-z)\quad \forall z\in {\mC}; \quad x^2g(x)\in \Lp^2({\mR})\bigr\}\end{equation}
given by the Fourier transform $g(z)=\int_{-b}^b {\frac{1-\cos zt}{z^2}}h(t)dt$. Take such a $g$ with $b=2;$ then $f(z)=g(\sqrt{z}/2)$ is an entire function of order 
$1/2$ and type $1$, hence belongs to the space $I^{3/2}$ as in \cite{MT}. McKean and Trubowitz consider complex sequences $(x_j)$ such that $\sum_{j=1}^\infty x_j f(\lambda_{2j})$ converges, and interpret such an expression in terms of the Jacobi map on ${\mathcal{E}}_\phi$ into the infinite real torus ${\mT}^\infty$.


\begin{thebibliography}{LRS}
\markboth{Gordon Blower, Caroline Brett and Ian Doust}{Stochastics: An International Journal}

\bibitem{BE} D. Bakry and M. \'Emery, Diffusions hypercontractives, in
     S{\'e}minaire de Probabilit{\'e}s XIX 1983/84,J.Az{\'e}ma and M. Yor, eds., Vol. 1123,
     Lecture Notes in Mathematics, Springer-Verlag, Berlin, 1985, pp. 177--206.
\bibitem{GB} G. Blower, A logarithmic Sobolev inequality for the invariant measure of the periodic Korteweg--de Vries equation, {\sl Stochastics} {\bf 84} (2012), 533--542.
\bibitem{BL} S.G. Bobkov and M. Ledoux,  From Brunn--Minkowski to Brascamp--Lieb and to logarithmic Sobolev inequalities, {\sl Geom. Funct. Anal.} {\bf 10} (2000), 1028--1052.
\bibitem{B1} J. Bourgain, Fourier transform restriction phenomena for certain lattice subsets and applications to nonlinear evolution equations. I. 
Schr\"odinger equations, {\sl Geom. Funct. Anal.} {\bf 3} (1993), 107--156.
\bibitem{B2} J. Bourgain, Fourier transform restriction phenomena for certain lattice subsets and applications to nonlinear evolution equations. II. 
The KdV equation. {\sl Geom. Funct. Anal.} {\bf 3} (1993), 209--262.
\bibitem{B} J. Bourgain, Periodic nonlinear Schr\"odinger equations and invariant measures, {\sl Comm. Math. Phys.} {\bf 166} (1994), 1--26.
\bibitem{RJOT} R.S. Ellis, R. Jordan, P. Otto and B. Turkington, A statistical approach to the asymptotic behavior of a class of generalized nonlinear Schr\"odinger equations, 
{\sl Comm. Math. Phys.} {\bf 244} (2004), 187--208.\par
\bibitem{HS} R. Holley and D.W. Stroock, Logarithmic Sobolev inequalities and stochastic Ising models, {\sl J. Statist. Phys.} {\bf 46} (1987), 1159--1194.
\bibitem{LRS} J.L. Lebowitz, H.A. Rose and E.R. Speer, Statistical mechanics of the nonlinear Schr\"odinger equation, {\sl J. Statist. Phys.} {\bf 50} (1988), 657--687.
\bibitem{MT} H.P. McKean and H. Trubowitz, Hill's operator and hyperelliptic function theory in the presence of infinitely many branch points, {\sl Comm. Pure Appl. Math.} {\bf 29} 
(1976), 143--226.
\bibitem{MV} H.P. McKean and L. Vaninsky, Action-angle variables for the cubic Schr\"odinger equation, {\sl Commun. Pure Appl. Math.} {\bf 50} (1997), 489--562.
\bibitem{M} J. Moser, Various aspects of integrable Hamiltonian systems, J. Guckenheimer, J. Moser and S.E. Newhouse, {\sl Dynamical Systems}
CIME Lectures, (Birkhauser, Boston 1980) pp. 233--289.
\bibitem{P} J.R. Partington, {\sl Interpolation, identification and sampling}, (Oxford University Press, 1997).
\bibitem{PS} L. Pastur and M. Shcherbina, {\sl Eigenvalue distribution of large random matrices}, (American Mathematical Society, Providence R.I. 2011).
\bibitem{V} C. Villani, {\sl Topics in Optimal Transportation}, (American Mathematical Society, Providence R.I. 2003).

\end{thebibliography}
\end{document}